\title{\textbf{A new characterization of  $E_8 (p)$ via its vanishing elements}\thanks{\footnotesize  \scriptsize\emph{E-mail address:}
      zsmcau@cau.edu.cn\,(S. Zhang).}}
\author{Shengmin Zhang\\
\quad
\\
{\small College of Science,
China Agricultural University,
Beijing 100083, China}}
\date{}
\newtheorem{theorem}{Theorem}[section]
\newtheorem{lemma}[theorem]{Lemma}
\newtheorem{corollary}[theorem]{Corollary}
\theoremstyle{definition}
\newtheorem{definition}[theorem]{Definition}
\newtheorem{question}[theorem]{Question}
\newtheorem{conjecture}[theorem]{Conjecture}
\let\expandafter\oldproof\csname\string\proof\endcsname
\let\oldendproof\endproof
\renewenvironment{proof}[1][\proofname]{%
  \oldproof[\bfseries\scshape #1]%
}{\oldendproof}
\renewcommand{\leq}{\leqslant}
\renewcommand{\geq}{\geqslant}
\begin{document}
\maketitle
\begin{abstract}
Let $G$ be a finite group, and $g \in G$. Then $g$ is said to be a vanishing element of $G$, if there exists an irreducible character $\chi$ of $G$  such that $\chi (g)=0$. Denote by ${\rm Vo} (G)$ the set of the orders of vanishing elements of $G$. We say a non-abelian group $G$ is V-recognizable, if any group $N$ with ${\rm Vo} (N) = {\rm Vo} (G)$ is isomorphic to $G$. In this paper, we investigate the V-recognizability of $E_8 (p)$, where $p$ is a prime number. As an application, among the 610 primes $p$ with $p<10000$  and $p  \equiv 0,1,4\,(\!\!\!\mod 5)$, we obtain that the method is always valid for confirming the V-recognizability of $E_8 (p)$ for all such $p$ but $  919,1289,1931,3911,4691,5381$ and $7589 $.

\noindent{\bf Keywords:} Characters; vanishing elements; Gruenburg-Kegel graph;  finite simple groups. \\
 \noindent{\bf MSC:}  20C15.
\end{abstract}
\section{Introduction}
All groups considered in this paper are finite. Let $G$ be a group, and $g$ be an element of $G$. Then $g$ is called a vanishing element of $G$, if there exists an irreducible character $\chi$ of $G$ such that $\chi (g)=0$. We denote the set of all vanishing elements of $G$ by ${\rm Van} (G)$, and the set of the orders of the vanishing elements in $G$ by ${\rm Vo} (G)$. Let $\pi (G)$ be the set of all prime divisors of $|G|$, and $\pi_e (G)$ be the set of the orders of elements in $G$. Then the {\it vanishing prime graph} of $G$, which is denoted by $\Gamma(G)$, and defined as follows: the vertices of the graph, say $V(\Gamma(G))$, are the prime divisors of elements in ${\rm Vo} (G)$. For any elements $p,q \in V(\Gamma(G))$, $p,q$ are said to be connected, if there exists an element $n \in {\rm Vo} (G)$ such that $pq \,|\,n$. 
  
In 2015, M. F. Ghasemabadi {\it et al.} proposed a conjecture as follows:
\begin{conjecture}[{{\cite{MA}}} or {{\cite[Problem 19.30]{VD}}}]
Let $G$ be a finite group and $H$ be finite non-abelian simple group. Then $G \cong H$ if and only if $|G| = |H|$ and ${\rm Vo}(G) = {\rm Vo}(H)$.
\end{conjecture}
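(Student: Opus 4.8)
The reverse implication, assuming $|G| = |H|$ and ${\rm Vo}(G) = {\rm Vo}(H)$ with $H$ non-abelian simple, is where the work lies; the forward implication is immediate, since isomorphic groups have equal orders and equal sets of vanishing-element orders. The plan is to convert the arithmetic hypothesis into structural information about $G$ in three stages: first extract graph-theoretic invariants, then use them together with $|G| = |H|$ to confine the structure of $G$, and finally match $G$ with $H$ through the classification of finite simple groups (CFSG).

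Stage one is bookkeeping. From ${\rm Vo}(G) = {\rm Vo}(H)$ we get $V(\Gamma(G)) = V(\Gamma(H))$ together with an identical adjacency relation, hence $\Gamma(G) = \Gamma(H)$; and from $|G| = |H|$ we get $\pi(G) = \pi(H)$. For a non-abelian simple $H$ the set ${\rm Vo}(H)$, and therefore $\Gamma(H)$, is computable from known conjugacy-class and character data: vanishing elements are abundant in simple groups, so ${\rm Vo}(H)$ is closely tied to the spectrum $\pi_e(H)$, and for groups of Lie type the components of $\Gamma(H)$ are governed by the orders of the maximal tori and are frequently disconnected. I would record the number of connected components of $\Gamma(H)$ and their prime content as the primary invariants to be reproduced, so that the hypothesis effectively becomes a recognition-by-order-and-spectrum condition.

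Stage two is the structural core. When $\Gamma(G)$ is disconnected, I would invoke, or establish as needed, a Gruenberg--Kegel-type dichotomy for the vanishing prime graph: such a $G$ is either a Frobenius or $2$-Frobenius group, or else has a unique non-abelian composition factor $S$ whose prime set $\pi(S)$ is tightly linked to $\pi(G)$. The Frobenius and $2$-Frobenius alternatives should be excluded by the order constraint, since $|H|$ for a large simple group carries many prime divisors and a rigid power structure incompatible with those solvable-by-nilpotent shapes; this leaves the almost-simple-type situation, where I would show that the socle $S$ absorbs the large primes of $H$ and then bound the index $|G:S|$ using $|G| = |H|$.

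The decisive step, and the one I expect to be the main obstacle, is to upgrade ``$G$ has a single non-abelian composition factor $S$ with $\pi(S)$ close to $\pi(H)$'' to ``$S \cong H$ and $G = S$''. Here ${\rm Vo}$ is genuinely coarser than the full spectrum, let alone the character table, so there is no uniform dictionary translating ${\rm Vo}(G)$ directly into the isomorphism type of $S$; pinning down $S$ requires feeding the vanishing data back into the CFSG list and comparing ${\rm Vo}(G)$ against the vanishing orders of each candidate simple group of the correct order and prime set. This is precisely why the conjecture becomes a theorem only for specific families, and why in the present paper a handful of primes $p$ resist the method for $E_8(p)$. Once $S$ is identified with $H$, the equality $|G| = |H| = |S|$ forces $G = S \cong H$, completing the argument.
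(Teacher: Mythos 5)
You have attempted to prove a statement that the paper itself records as an open \emph{conjecture} (quoted from Ghasemabadi et al.\ and Problem 19.30 of the Kourovka Notebook): the paper contains no proof of it, and proves only the special case stated as Theorem \ref{main theorem}, namely $H = E_8(r)$ under extra arithmetic hypotheses --- and, notably, does so without using the hypothesis $|G| = |H|$ at all. So there is no paper proof to compare against, and your text is not a proof either: by your own admission the ``decisive step'' --- upgrading ``$G$ has a unique non-abelian composition factor $S$ with $\pi(S)$ close to $\pi(H)$'' to ``$S \cong H$ and $G = S$'' --- is left unresolved, and that step \emph{is} the content of the conjecture. Gesturing at ``comparing ${\rm Vo}(G)$ against the vanishing orders of each candidate in the CFSG list'' is not an argument unless the comparison is actually carried out uniformly for every non-abelian simple $H$, which nobody has done; this is exactly why the literature, including this paper, proceeds one family at a time and why even here a handful of primes resist the method.

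Beyond that structural circularity, two concrete steps would fail as written. First, your stage two engages only when $\Gamma(G)$ is disconnected: the Dolfi--Pacifici--Sanus--Spiga dichotomy (Lemma \ref{1}) gives a unique non-abelian chief factor only for non-solvable $G$ with disconnected vanishing graph, and says merely that solvable groups have at most two components. But the conjecture covers all non-abelian simple $H$, including many (e.g.\ alternating groups of large degree) whose vanishing prime graph is connected; in that case your argument produces no structural information whatever, and a connected $\Gamma(G)$ is perfectly compatible with $G$ solvable --- your dismissal of the Frobenius and $2$-Frobenius alternatives ``by the order constraint'' is an assertion, not a proof. Second, your stage-one claim that ${\rm Vo}(H)$ is closely tied to $\pi_e(H)$ is clean only for groups of Lie type, where $p$-defect zero characters exist for every prime $p$ (Lemma \ref{2}), giving ${\rm Vo}(H) = \pi_e(H) \setminus \lbrace 1 \rbrace$ as in Lemma \ref{6}(1); for alternating and sporadic groups the primes $2$ and $3$ require separate treatment, so the reduction of the hypothesis to ``recognition by order and spectrum'' is not automatic. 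In short, you have correctly reconstructed the standard recognition template that the paper instantiates for $E_8(r)$, but the gaps you defer are precisely the gaps that keep the statement a conjecture.
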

In {{\cite{MA}}}, M. Foroudi Ghasemabadi {\it et al.} obtained that several families of simple groups of Lie type can be uniquely determined by their vanishing elements and orders. Khatami and Babai extends this kind of results to Suzuki's simple groups \cite{MA2}. S. Askary showed in {{\cite{AS}}} that the simple groups of Lie type ${\rm PSL} (3,p)$, where $p$ is a prime, can be uniquely determined by its set of the orders of the vanishing elements and its order. In {{\cite{AS2}}}, S. Askary considered the cases of the simple groups of Lie type $^2 D_{r+1} (2)$ and $^2 D_r (3)$. Since it is possible that some non-abelian groups can be uniquely determined only by their sets of the orders of the vanishing elements without the hypothesis that $|G| = |H|$,  we consider the definition as follows:
\begin{definition}
Let $G$ be a non-abelian group, and $\Omega$ be a set of positive integers. Denote by $v(\Omega)$ the number of isomorphism classes of finite group $N$ such that ${\rm Vo} (N) =\Omega$. It obvious that $v({\rm Vo} (G)) \geq 1$. Then $G$ is called V-recognizable, if $v({\rm Vo} (G)) = 1$.
\end{definition}
In {{\cite{QY}}}, Q. Yan proved that the sporadic simple groups $J_1$, $J_4$ can be uniquely determined by its set of the orders of the vanishing elements, i.e., $J_1$, $J_4$ are V-recognizable. It has also been proved that Suzuki’s simple groups $Sz (2^{2m+1})$, $m \geq 1$ ,${\rm PSL} (2,2^a)$, $a \geq 2$, ${\rm PSL} (2,p)$, $p = \lbrace 5, 7, 8, 17, 211, 269, 283, 293\rbrace$, ${\rm PSL} (3,4)$ and $A_7$ are V-recognizable. In this paper, we continue this work and obtain the following result.
\begin{theorem}\label{main theorem}
Let $r$ be a prime, where $r \equiv 0,1,4\,(\!\!\!\mod 5)$. If for any $s^t < r$, where $s$ is a prime, and $s^t \equiv 0,1,4\,(\!\!\!\mod 5)$, we have $\pi (J_4),\pi (E_8 (s^t)) \not\subseteq \pi (E_8 (r))$, then $E_8 (r)$ is V-recognizable, i.e., for any finite group $G$, $G \cong E_8 (r)$ if and only if ${\rm{Vo}} (G) = {\rm{Vo}} (E_8 (r))$.
\end{theorem}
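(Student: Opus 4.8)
The plan is to take an arbitrary finite group $G$ with ${\rm Vo}(G) = {\rm Vo}(E_8(r))$ and force $G \cong E_8(r)$, the whole argument being driven by the vanishing prime graph $\Gamma(G)$, which is determined by ${\rm Vo}(G)$ and therefore equals $\Gamma(E_8(r))$. First I would compute $\Gamma(E_8(r))$ explicitly. The key input is that in a finite simple group of Lie type an overwhelming proportion of elements are vanishing: every element whose order is divisible by a primitive prime divisor of $r^k-1$ for an invariant degree $k$ of $E_8$ (so $k \in \{2,8,12,14,18,20,24,30\}$) lies in a self-centralizing maximal torus and is annihilated by a suitable irreducible (e.g.\ semisimple or Deligne--Lusztig) character. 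From this I would deduce that $\Gamma(E_8(r))$ coincides with the ordinary Gruenberg--Kegel graph $GK(E_8(r))$, whose components are recorded in the literature. The congruence $r \equiv 0,1,4\,(\!\!\!\mod 5)$ is precisely what controls the factorization of $\Phi_{15}(r)$ and $\Phi_{30}(r)$, guaranteeing a uniform disconnected shape in which the components carrying the large primitive prime divisors are separated from the component containing the characteristic $r$ and the prime $2$.

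Second, since $\Gamma(G) = \Gamma(E_8(r))$ is disconnected, I would invoke the Gruenberg--Kegel-type classification of finite groups with disconnected (vanishing) prime graph. This confines $G$ to a Frobenius group, a $2$-Frobenius group, or a group with a normal series $1 \trianglelefteqslant H \trianglelefteqslant K \trianglelefteqslant G$ in which $K/H$ is non-abelian simple and accounts for the isolated components of $\Gamma(G)$. The Frobenius and $2$-Frobenius possibilities are discarded because they force a very constrained two-component pattern, incompatible with the component structure of $\Gamma(E_8(r))$. Hence $G$ has such a series and all the large primitive prime divisors lie in $\pi(K/H)$.

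Third --- and this is where the hypothesis is consumed --- I would identify the simple section $S := K/H$. Since ${\rm Vo}(G) = {\rm Vo}(E_8(r))$ gives $\pi(G) = \pi(E_8(r))$, we have $\pi(S) \subseteq \pi(E_8(r))$, while the isolated components force the large primitive prime divisors of $r^{30}-1$ and its relatives into $\pi(S)$. Matching these two constraints against the prime-graph components of all finite simple groups (via CFSG) cuts the candidates for $S$ down to a short list, and order and spectrum comparisons eliminate all but a few. The genuinely dangerous survivors are the sporadic group $J_4$ and the groups $E_8(s^t)$ of the same type with $s^t < r$ and $s^t \equiv 0,1,4\,(\!\!\!\mod 5)$, whose prime sets can a priori embed into $\pi(E_8(r))$; the hypothesis $\pi(J_4),\pi(E_8(s^t)) \not\subseteq \pi(E_8(r))$ is exactly what removes them, leaving $S \cong E_8(r)$.

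Finally I would promote the section to an isomorphism. Because $r$ is prime, $E_8(r)$ has trivial Schur multiplier and trivial outer automorphism group, so ${\rm Aut}(E_8(r)) = E_8(r)$; combined with $\pi(G) = \pi(E_8(r))$ and the fact that $\Gamma(G)$ is already fully prescribed, a nontrivial kernel $H$ or cokernel $G/K$ would either merge components of $\Gamma(G)$ or adjoin new vanishing-element orders, contradicting ${\rm Vo}(G) = {\rm Vo}(E_8(r))$. Hence $H = 1$, $K = G$, and $G \cong E_8(r)$. The step I expect to be the main obstacle is the third one: trimming the CFSG candidate list to a manageable set and proving that the only coincidences that can survive are $J_4$ and the smaller $E_8(s^t)$ in the same congruence class. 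This is both the heart of the proof and the source of its conditional form, since the listed exceptional primes $919, 1289, 1931, 3911, 4691, 5381, 7589$ are exactly those $r$ for which some $\pi(J_4)$ or $\pi(E_8(s^t))$ does sit inside $\pi(E_8(r))$ and the hypothesis --- hence the method --- breaks down.
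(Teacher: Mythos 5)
Your skeleton does match the paper's architecture (from ${\rm Vo}(G)={\rm Vo}(E_8(r))$ deduce $\Gamma(G)=GK(E_8(r))$ with five components; disconnectedness forces a unique non-abelian section; identify it among $J_4$ and $E_8(q)$ via the Williams--Kondrat'ev tables of simple groups with $\ge 5$ prime-graph components; the hypothesis kills $J_4$ and $E_8(s^t)$ with $s^t<r$; ${\rm Out}(E_8(r))=1$ finishes), but two load-bearing steps are genuinely missing. The most serious is your treatment of $E_8(q)$ with $q>r$: you claim these are removed by ``order and spectrum comparisons,'' yet no order comparison is available --- V-recognizability carries no hypothesis $|G|=|E_8(r)|$, and the kernel and cokernel around the simple section are unknown, so $|S|$ cannot be compared with $|E_8(r)|$. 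The case $q>r$ must be eliminated unconditionally, and this is exactly the paper's main technical novelty (Lemma \ref{5}): using Buturlakin's description of the spectrum of $E_8(q)$ (Lemma \ref{7}), one takes $T=(q^2+1)(q^6-1)\in\pi_e(E_8(q))$, bounds its $p$-part by $q^2+q+1$, and shows its $p'$-part, at least $(q^4-1)(q^2-q+1)\ge p^6+5p^5+\cdots$, exceeds the $p'$-part of every admissible maximal element order of $E_8(p)$, whence $\pi_e(E_8(q))\not\subseteq\pi_e(E_8(p))$ for $p<q$; combined with the defect-zero and inflation lemmas this rules out the section $E_8(q)$, $q>r$. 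Your proposal contains no substitute for this argument, and your conditional-form explanation is consequently slightly off: the exceptional primes are those where the hypothesis on $J_4$ and on \emph{smaller} $E_8(s^t)$ fails, while larger $q$ are handled unconditionally.

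The second gap is the endgame. Your claim that a nontrivial kernel $H$ ``would merge components of $\Gamma(G)$ or adjoin new vanishing-element orders'' is not a proof and, as stated, points the wrong way: a nontrivial solvable radical need not create any element order outside $\pi_e(E_8(r))$. The paper's argument shows precisely the opposite --- ${\rm Vo}(G)={\rm Vo}(E_8(r))$ forces, for a minimal quotient $\widetilde{G}=G/V$, the equality $\pi_e(\widetilde{M})=\pi_e(E_8(r))$ on the nose --- and the contradiction then comes not from new orders but from Kondrat'ev's recognizability of $E_8$ by spectrum (Lemma \ref{6}\,(2)): spectrum equality forces $\widetilde{M}\cong E_8(r)$, incompatible with its nontrivial normal elementary abelian subgroup $\widetilde{N}$. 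That step also needs $V(\Gamma(G))=\pi(G)$ (the paper's Lemma \ref{4}, excluding the $A_5$ alternative) to place the new prime among vanishing vertices, and it needs the full equality ${\rm Vo}(E_8(r))=\pi_e(E_8(r))\setminus\lbrace 1\rbrace$, which follows from the existence of $p$-defect zero characters for \emph{every} prime in a simple group of Lie type (Lemma \ref{2}), not merely from your Deligne--Lusztig/torus mechanism, which only makes elements of the large tori vanish. Finally, a smaller but real point: the Frobenius/$2$-Frobenius/simple trichotomy is a theorem about $GK(G)$, and disconnectedness of $\Gamma(G)$ does not imply disconnectedness of $GK(G)$ (non-vanishing element orders can supply extra edges); the correct tool is the Dolfi--Pacifici--Sanus--Spiga theorem for the vanishing prime graph (the paper's Lemma \ref{1}), which directly yields the unique non-abelian chief factor $S$ with $n(\Gamma(G))\le n(GK(S))$.
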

\begin{corollary}\label{corollary}
Let $p$ be a prime such that  $p \equiv 0,1,4\,(\!\!\!\mod 5)$, and $p < 10000$. If the following hold
$$p \neq 919,1289,1931,3911,4691,5381,7589,$$
then $E_8 (p)$ is V-recognizable. 
\end{corollary}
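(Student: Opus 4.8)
The plan is to derive the corollary directly from Theorem~\ref{main theorem} by checking its hypothesis for every prime in the stated range. Concretely, it suffices to show that for each of the $610$ primes $p<10000$ with $p\equiv 0,1,4\pmod{5}$ other than the seven listed exceptions, and for every prime power $s^t<p$ with $s$ prime and $s^t\equiv 0,1,4\pmod{5}$, one has $\pi(J_4)\not\subseteq\pi(E_8(p))$ and $\pi(E_8(s^t))\not\subseteq\pi(E_8(p))$. Once these non-containments are in hand, Theorem~\ref{main theorem} applies verbatim and yields the V-recognizability of $E_8(p)$. Thus the whole content of the corollary is a finite verification of the hypothesis of the main theorem.

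The key computational tool is the explicit order formula
\[
|E_8(q)| = q^{120}(q^2-1)(q^8-1)(q^{12}-1)(q^{14}-1)(q^{18}-1)(q^{20}-1)(q^{24}-1)(q^{30}-1),
\]
from which a prime $\ell$ lies in $\pi(E_8(q))$ if and only if $\ell$ is the defining characteristic of $q$ or the multiplicative order of $q$ modulo $\ell$ divides one of the degrees $2,8,12,14,18,20,24,30$; equivalently $\Phi_e(q)$ contributes to $|E_8(q)|$ precisely for $e\in\{1,2,3,4,5,6,7,8,9,10,12,14,15,18,20,24,30\}$. This reduces each containment test to a comparison of two finite sets of primes, which I would read off from the factorizations of the relevant cyclotomic values $\Phi_e(q)$. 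To avoid factoring the enormous integers $q^{30}-1$ in full, I would invoke Zsygmondy's theorem: each $\Phi_e(s^t)$ carries a primitive prime divisor (with the usual short list of exceptions), and exhibiting one such prime that fails to divide $|E_8(p)|$ already certifies $\pi(E_8(s^t))\not\subseteq\pi(E_8(p))$.

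For the condition involving $J_4$ one uses $\pi(J_4)=\{2,3,5,7,11,23,29,31,37,43\}$, so that $\pi(J_4)\subseteq\pi(E_8(p))$ would force all ten of these primes---in particular the large primes $37$ and $43$---to divide $|E_8(p)|$, a coincidence that occurs only for very special $p$. I expect the seven excluded primes to be exactly those $p<10000$ for which either this $J_4$-containment holds or some smaller $E_8(s^t)$ satisfies $\pi(E_8(s^t))\subseteq\pi(E_8(p))$; for every other $p$ in range at least one of the two required non-containments is witnessed by a suitable primitive prime divisor. The bulk of the work is therefore the systematic search over all pairs $(p,s^t)$ with $s^t<p$.

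The main obstacle is computational rather than conceptual. The search ranges over $610$ base primes, each compared against all smaller admissible prime powers, and the underlying integers $q^{30}-1$ grow to roughly $10^{120}$ for $q$ near $10000$; organizing this so that every non-containment is certified rigorously---ideally by producing an explicit distinguishing prime rather than relying on a complete factorization---is the delicate point. I would carry out the verification in a computer algebra system, recording for each non-exceptional $p$ a witnessing prime for each required non-containment, and separately confirm that the hypothesis of Theorem~\ref{main theorem} genuinely fails for the seven listed primes, so that their exclusion is necessary for the present method. With the hypothesis verified for the remaining $603$ primes, Theorem~\ref{main theorem} immediately gives the corollary.
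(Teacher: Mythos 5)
Your proposal is correct and takes essentially the same approach as the paper: the paper likewise derives the corollary by reducing it to a finite computational verification of the hypothesis of Theorem~\ref{main theorem} for all primes $p<10000$ with $p\equiv 0,1,4\,(\!\!\mod 5)$ outside the exceptional set, using the explicit description of $\pi(E_8(r))$ as the primes dividing $r(r^2-1)(r^5\pm1)(r^6+1)(r^7\pm1)(r^8-r^4+1)(r^9\pm1)(r^{10}\pm r^5+1)(r^{10}+1)$ together with $\pi(J_4)=\{2,3,5,7,11,23,29,31,37,43\}$, carried out by a Python program that factors these values directly. Your suggested certification via multiplicative orders modulo $\ell$ and Zsygmondy primitive prime divisors is merely a different (arguably more rigorously auditable) implementation of the same finite check, not a different mathematical route.
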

\begin{question}
Let $r$ be a prime, where $r \equiv 2,3\,(\!\!\!\mod 5)$.
Is the non-abelian simple group $E_8 (r)$ $V$-recognizable?
\end{question}
\section{Preliminary results}
Let $\Omega$ be a finite set of positive integers. $\Pi (\Omega)$ is the simple undirected graph whose vertices are the prime divisors of the elements of $\Omega$. For two vertices of $\Pi (\Omega)$ named $p,q$, they are connected if there exists an element of $\Omega$ divisible by $p$ and $q$. For a finite group $G$, the graph $\Pi (\pi_e (G))$, which is denoted by $GK(G)$, is also said to be the $Gruenberg$-$Kegel$  $graph$ of $G$. The prime graph $\Pi ({\rm Vo}(G))$ is called the vanishing prime graph of $G$ and is denoted by $\Gamma (G)$ in this paper. Let $\mathcal{G}$ be a graph. Then $V (\mathcal{G})$ denotes the vertex set of a graph $\mathcal{G}$, and $n(\mathcal{G})$ indicates the number of connected components of $\mathcal{G}$. Let $G$ be a finite group, $p$ a prime divisor of $|G|$ and $\chi: G \rightarrow \mathbb{C}$ an irreducible complex character of $G$. Then $\chi$ is said to be {\it $p$-defect zero}, if the integer $|G| / \chi(1)$ is not divisible by $p$. 

The next lemma describes the relationship between the solvability of group $G$ and the structure of {\it Gruenberg-Kegel graph} of $G$, especially the connected components.
\begin{lemma}[{{\cite{DO}}}]\label{1}
Let $G$ be a finite group. Then the following statements hold:
\begin{itemize}
\item[(1)] If $G$ is solvable, then $\Gamma (G)$ has at most two connected components;
\item[(2)] If $G$ is non-solvable and $\Gamma (G)$ is disconnected, then $G$ has a unique non-abelian chief factor $S$,
and $n(\Gamma (G)) \leq n(GK(S))$ unless $G$ is isomorphic to $A_7$.
\end{itemize}
\end{lemma}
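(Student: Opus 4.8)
My plan is to mirror the classical Gruenberg--Kegel--Williams analysis of disconnected prime graphs, feeding in the two character-theoretic inputs that decide which elements vanish. The first, call it (i), is the block-theoretic fact that an irreducible character of $p$-defect zero vanishes on every $p$-singular element; hence if $G$ has a $p$-defect-zero irreducible character, then every element of order divisible by $p$ is vanishing, so $p \in V(\Gamma(G))$ and $p$ is joined in $\Gamma(G)$ to every prime to which it is joined in $GK(G)$. The second, call it (ii), is the non-vanishing theorem of Dolfi, Pacifici, Sanus and Spiga: in a solvable group every non-vanishing element lies in the Fitting subgroup $\mathbf{F}(G)$. These inputs transfer information between $\Gamma(G)$ and the better-understood graph $GK(G)$; throughout I keep in mind that $\Gamma(G)$ is a vertex- and edge-subgraph of $GK(G)$, so passing from $GK(G)$ to $\Gamma(G)$ can only split connected components, never merge them.

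For part (1), let $G$ be solvable. By (ii) every element outside $\mathbf{F}(G)$ is vanishing, so every adjacency of $GK(G)$ witnessed by an element not lying in $\mathbf{F}(G)$ survives in $\Gamma(G)$; the only edges that can be lost are those realized solely inside the nilpotent subgroup $\mathbf{F}(G)$, and the primes carrying them form a clique of $GK(G)$. I would then invoke the solvable case of the Gruenberg--Kegel theorem, that $GK(G)$ has at most two components with a Frobenius or $2$-Frobenius structure when disconnected, and argue that the loss of these $\mathbf{F}(G)$-internal edges cannot push the count above two. The delicate point, and the real content of part (1), is to show that a prime confined to $\mathbf{F}(G)$ either drops out of $V(\Gamma(G))$ altogether or stays tied to the rest of the graph through a vanishing element lying outside $\mathbf{F}(G)$, so that it cannot split off as a spurious third component.

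For part (2), let $G$ be non-solvable with $\Gamma(G)$ disconnected. I would first establish uniqueness of the non-abelian chief factor: a non-abelian minimal normal section is a direct power $S^{k}$ of a simple group, and were $G$ to carry two non-abelian chief factors, or one with $k \ge 2$, the defect-zero characters supplied by these simple sections would, via (i), make enough $p$-singular elements across the various primes vanish to connect $\Gamma(G)$, contrary to hypothesis; thus exactly one non-abelian chief factor $S$ occurs. Writing a normal series $1 \trianglelefteq H \trianglelefteq K \trianglelefteq G$ with $H$ and $G/K$ solvable and $K/H \cong S$, I would then track components through the series: the primes dividing $|H|$ or $|G/K|$ attach to components already present, so the separation of $\Gamma(G)$ is governed by $S$ alone, and the goal becomes the inequality $n(\Gamma(G)) \le n(GK(S))$.

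The main obstacle is exactly this final comparison, and it is here that $A_7$ must be excluded. By the theorem of Granville and Ono, every finite non-abelian simple group has a $p$-defect-zero character for each prime $p \ge 5$, so by (i) the components of $GK(S)$ built from primes $\ge 5$ are automatically realized by vanishing elements and survive in $\Gamma(G)$; the only danger is the component containing $2$ and $3$, where a defect-zero character may be unavailable. One must therefore check, running down the Gruenberg--Kegel--Williams list of simple groups with disconnected prime graph, that the edges inside this small-prime component remain witnessed by vanishing elements. This succeeds in every case but $A_7$: since $|A_7|_2 = 8$ and $|A_7|_3 = 9$ divide none of the degrees $1,6,10,10,14,14,15,21,35$, the group $A_7$ has neither a $2$- nor a $3$-defect-zero character, input (i) is powerless on the edge $2 \sim 3$, and in fact the order-$6$ elements of $A_7$ are non-vanishing; hence $\Gamma(A_7)$ splits the $\{2,3\}$ component and acquires one more component than $GK(A_7)$, which is the stated exception. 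Confirming that $A_7$ is the only such failure is the genuinely case-sensitive core of the argument.
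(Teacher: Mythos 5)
You should first note that the paper contains no proof of this lemma at all: it is quoted verbatim from Dolfi--Pacifici--Sanus--Spiga \cite{DO}, so the only meaningful benchmark is that source. Your sketch assembles the correct toolkit from that literature: $p$-defect-zero characters vanish on $p$-singular elements; for solvable $G$ the non-vanishing elements lie in $\mathbf{F}(G)$ (a theorem of Isaacs--Navarro--Wolf, incidentally, not of Dolfi--Pacifici--Sanus--Spiga as you attribute it); Granville--Ono supplies defect-zero characters for all primes $p \ge 5$; and the Williams--Kondrat'ev classification governs $GK(S)$. Your analysis of the exception is also accurate: none of the degrees $1,6,10,10,14,14,15,21,35$ of $A_7$ is divisible by $8$ or by $9$, the class of order $6$ is genuinely non-vanishing, and consequently $\Gamma(A_7)$ has one more component than $GK(A_7)$. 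But at each of the three decisive junctures the proposal records an intention rather than an argument, and you say so yourself, so what you have is a roadmap, not a proof.

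Concretely: (a) In part (1), the whole content is the step you flag as ``the delicate point.'' A $p$-element outside $\mathbf{F}(G)$ is vanishing yet carries no edge, so each prime of the clique $\pi(\mathbf{F}(G))$ can survive as a vertex of $\Gamma(G)$ while losing all its edges; hence the inclusion of $\Gamma(G)$ in $GK(G)$ together with the solvable case of the Gruenberg--Kegel theorem cannot by itself cap the component count at two --- a shattered clique could a priori produce many isolated vertices. The actual proof of this bound in \cite{DO} is a direct argument about solvable groups, not a perturbation of $GK(G)$, and nothing in your sketch replaces it. (b) In part (2), uniqueness of the non-abelian chief factor is dispatched with ``would \ldots make enough $p$-singular elements across the various primes vanish to connect $\Gamma(G)$,'' which is not an argument: the crux is exactly the primes $2$ and $3$, where Granville--Ono is silent, and the source needs a classification-based lemma (every non-abelian simple group has a $p$-defect-zero character for some $p \in \{2,3\}$ outside an explicit list, with the listed groups handled by ad hoc vanishing results) both here and for your unproven claim that the primes of the solvable sections $H$ and $G/K$ ``attach to components already present,'' without which the inequality $n(\Gamma(G)) \le n(GK(S))$ does not follow. (c) You verify that $A_7$ is an exception but explicitly defer the verification that it is the \emph{only} one, which, as you note, is the case-sensitive core of part (2). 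Each of (a), (b), (c) is a genuine gap; filling them would essentially reproduce the proofs in \cite{DO}.
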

\begin{lemma}[{{\cite[Proposition 2.1]{DO2}}}]\label{2}
Let $G$ be a non-abelian simple group and $p$ a prime number. If $G$ is of
Lie type, or if $p \geq  5$, then there exists an irreducible complex character $\chi$ of $G$ of $p$-defect zero.
\end{lemma}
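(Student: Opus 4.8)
\section*{Proof proposal}

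The plan is to reduce the assertion to a block-theoretic reformulation and then run through the families furnished by the classification of finite simple groups (CFSG). Recall that, by Brauer's theory, an irreducible character $\chi$ has $p$-defect zero precisely when $\chi(1)_p = |G|_p$, and that the existence of such a $\chi$ is equivalent to the existence of a $p$-block of $G$ of defect zero; moreover such a $\chi$ vanishes on all $p$-singular elements, which is what makes this lemma useful for the vanishing-element analysis. If $p \nmid |G|$ there is nothing to prove, since then $|G|_p = 1$ and the trivial character already has $p$-defect zero, so throughout I would assume $p \mid |G|$.

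First I would dispose of the groups of Lie type, which is the clause valid for every prime $p$. Write $G$ as a simple group of Lie type in defining characteristic $\ell$ over a field of size $q = \ell^f$. If $p = \ell$, the Steinberg character $\mathrm{St}$ does the job: its degree equals $q^N = |G|_\ell = |G|_p$, where $N$ is the number of positive roots, so $|G|/\mathrm{St}(1)$ is prime to $p$ and $\mathrm{St}$ has $p$-defect zero. If $p \neq \ell$ is a non-defining prime, the Steinberg character is no longer available, and here I would appeal to the known fact---provable via Deligne--Lusztig theory by exhibiting a semisimple character lying in a suitable Lusztig series, or else cited directly---that a group of Lie type always possesses a $p$-block of defect zero. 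This completes the ``$G$ of Lie type'' case uniformly in $p$.

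For the remaining simple groups I would use the hypothesis $p \geq 5$. For the alternating groups, recall that the $p$-blocks of the symmetric group $S_n$ are indexed by $p$-cores, a block being of defect zero exactly when it corresponds to a $p$-core partition of $n$ itself; thus I need a $p$-core partition of $n$ to exist. This is supplied by the Granville--Ono theorem, which guarantees that for every $t \geq 4$ and every $n \geq 0$ there is a $t$-core partition of $n$; applying it with $t = p \geq 5$ yields a defect-zero character $\psi$ of $S_n$. To descend to $A_n$ I would use that $[S_n : A_n] = 2$ is prime to $p$: the restriction $\psi|_{A_n}$ is either irreducible or a sum of two irreducibles, and in either case each constituent has the same $p$-part in its degree as $\psi(1)$, so it remains of $p$-defect zero. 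For the $26$ sporadic groups the statement is a finite verification: for each sporadic $G$ and each prime $p \geq 5$ dividing $|G|$ one reads off from the known character degrees (for instance the ATLAS) an irreducible $\chi$ with $\chi(1)_p = |G|_p$.

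I expect the genuine obstacles to be twofold. The first is the non-defining-characteristic case for groups of Lie type, where no single ``universal'' character like $\mathrm{St}$ is available and one must either invoke Deligne--Lusztig machinery or organize a case analysis by Lie rank and type. The second, and in my view the harder conceptual input, is the alternating-group case: the existence of a $p$-core partition of every $n$ is a nontrivial number-theoretic statement (the Granville--Ono result), and it is precisely the reason the clean threshold $p \geq 5$---equivalently $t \geq 4$---appears, since for $p = 2, 3$ the analogous existence can fail and genuine exceptions arise. Assembling these ingredients into one uniform statement across all of CFSG is then the main bookkeeping task.
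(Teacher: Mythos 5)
The paper offers no proof of this statement at all: it is quoted directly from \cite[Proposition~2.1]{DO2}, and the proof of that cited result is precisely the assembly you describe --- the Steinberg character in defining characteristic, the Michler--Willems existence of defect-zero blocks in non-defining characteristic via Deligne--Lusztig theory, the Granville--Ono theorem on $t$-core partitions for $t \ge 4$ (which is exactly where the threshold $p \ge 5$ comes from) for the alternating groups, and a finite ATLAS check for the sporadic groups. Your sketch is therefore correct in outline and reconstructs the standard argument underlying the citation, with the honest caveat, which you yourself flag, that the non-defining-characteristic Lie-type case remains a cited black box rather than an argument.
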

In the next lemma, we collect some basic properties relating to the vanishing elements of a group $G$ and the vanishing elements of the quotients of $G$.
\begin{lemma}[{{\cite{DO}}}]\label{3}
Let $N$ be a normal subgroup of a finite group $G$.
\begin{itemize}
\item[(1)] Any character of $G/N$ can be viewed, by inflation, as a character of $G$. In particular, if $xN \in 
{\rm{Van}} (G/N)$, then $xN \subseteq {\rm{Van}} (G)$.
\item[(2)] If $p \in \pi (N)$ and $N$ has an irreducible character of $p$-defect zero, then every element of $N$ of order
divisible by $p$ is a vanishing element of $G$.
\item[(3)] If $m \in {\rm{Vo}}(G/N)$, then there exists an integer $n$ such that $mn \in {\rm{Vo}}(G)$.
\end{itemize}
\end{lemma}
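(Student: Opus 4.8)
The plan is to treat the three parts separately, since parts~(1) and~(3) are formal consequences of the inflation construction while part~(2) carries the genuine modular-representation content. For part~(1), I would recall the inflation map explicitly: given a character $\bar\chi$ of $G/N$, define $\chi \colon G \to \mathbb{C}$ by $\chi(g) = \bar\chi(gN)$, which is the composite of the projection $G \to G/N$ with a representation affording $\bar\chi$. Hence $\chi$ is a character of $G$, and it is irreducible whenever $\bar\chi$ is (the inner products are preserved because the projection is surjective and $N$ lies in the kernel). Since $\chi$ is constant on cosets of $N$, if $\bar\chi(xN)=0$ then $\chi(xn)=\bar\chi(xN)=0$ for every $n \in N$; taking $\bar\chi$ to be an irreducible character witnessing $xN \in {\rm Van}(G/N)$ yields the whole coset $xN \subseteq {\rm Van}(G)$.

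Part~(3) would then follow immediately from part~(1). If $m \in {\rm Vo}(G/N)$, pick $xN \in {\rm Van}(G/N)$ with $o(xN)=m$. By part~(1) the representative $x$ lies in ${\rm Van}(G)$, and since the order of a coset divides the order of any representative, $m = o(xN)$ divides $o(x)$. Writing $n = o(x)/m$ gives $mn = o(x) \in {\rm Vo}(G)$, which is exactly the desired conclusion.

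The substantive content, and the main obstacle, is part~(2). Here I would first invoke the classical vanishing property of defect-zero characters: an irreducible character $\theta$ of $N$ of $p$-defect zero satisfies $\theta(x)=0$ for every $p$-singular element $x \in N$, that is, for every element of order divisible by $p$. The task is then to transfer this vanishing from $N$ up to $G$, and for this I would use Clifford theory. Let $\chi$ be an irreducible constituent of the induced character $\theta^{G}$; by Clifford's theorem its restriction decomposes as $\chi_N = e(\theta_1 + \cdots + \theta_t)$, where $\theta_1,\dots,\theta_t$ are the $G$-conjugates of $\theta$ and $e$ is a positive integer. Conjugation by elements of $G$ preserves the character degree and hence the $p$-defect, so each $\theta_i$ is again of $p$-defect zero and vanishes on every $p$-singular element of $N$. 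Consequently, for any $x \in N$ with $p \mid o(x)$ we obtain $\chi(x) = e\sum_{i=1}^{t}\theta_i(x) = 0$, so $x \in {\rm Van}(G)$.

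The two points I expect to require the most care are verifying that the $p$-defect-zero property is invariant under $G$-conjugation (which reduces to the invariance of $\theta(1)$ and of the $p$-part of $|N|$) and confirming that the cited defect-zero vanishing theorem applies verbatim to all $p$-singular elements rather than merely to $p$-elements. Once these are secured, the Clifford-theoretic lift is short, and the combination of parts~(1)--(3) gives the full statement.
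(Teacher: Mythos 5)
Your proposal is correct and complete: the inflation argument for (1), the coset-order divisibility for (3), and the Clifford-theoretic lift in (2) (using that $G$-conjugation preserves degrees, hence $p$-defect zero, together with the classical fact that a defect-zero irreducible character vanishes on all $p$-singular elements, not just $p$-elements) are all sound. The paper itself gives no proof of this lemma, citing it to Dolfi--Pacifici--Sanus--Spiga, and your argument is essentially the standard one found in that reference, so nothing further is needed.
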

\begin{lemma}[{{\cite{DO}}}]\label{4}
Let $G$ be a group. Assume that $V (\Gamma (G)) \neq \pi (G)$. Then $\Gamma (G)$ is connected.
Moreover, if $G$ is non-solvable, then $G$ has a unique non-abelian composition factor $S$ and $S \cong A_5$.
\end{lemma}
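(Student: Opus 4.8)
The plan is to argue from the contrapositive form of the hypothesis. Since $V(\Gamma(G))\neq\pi(G)$, fix a prime $p\in\pi(G)\setminus V(\Gamma(G))$; by definition of the vanishing prime graph this means precisely that \emph{no} irreducible character of $G$ vanishes on any element whose order is divisible by $p$ (a $p$-singular element). The one standard fact I would bring in from block theory is that an irreducible character of $p$-defect zero vanishes on every $p$-singular element. Together with Lemma~\ref{2}, this tells us that whenever $S$ is a non-abelian simple group that is either of Lie type or satisfies $p\ge 5$ and $p\mid |S|$, the group $S$ has a vanishing element of order divisible by $p$.

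The first concrete step is to show that \emph{every non-abelian composition factor $T$ of $G$ has order prime to $p$}. Suppose not, so $p\mid |T|$, and realise $T$ inside a non-abelian chief factor $H/K\cong T^{m}$ with $K\trianglelefteq H\trianglelefteq G$. Tensoring $m$ copies of a $p$-defect-zero character of $T$ produces a $p$-defect-zero character of $T^{m}$ (the degree keeps the full $p$-part and the index the trivial one), so Lemma~\ref{3}(2) applied inside $G/K$ shows that every $p$-singular element of $H/K$ is a vanishing element of $G/K$; lifting through Lemma~\ref{3}(1) gives a vanishing element of $G$ of order divisible by $p$, contradicting the choice of $p$. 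The only cases not covered by Lemma~\ref{2} are the non-abelian simple groups that are not of Lie type with $p\in\{2,3\}$; here I would invoke the Granville--Ono classification of blocks of defect zero and check the resulting short list of exceptions directly, verifying that each still carries a vanishing element of order divisible by $p$. This forces $p$ to divide no non-abelian composition factor, so the non-vanishing prime $p$ belongs entirely to the solvable part of $G$, and a transfer/Schur--Zassenhaus analysis locates it in a normal $p$-subgroup.

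The substantial part, and the one I expect to be the main obstacle, is the structural conclusion in the non-solvable case together with connectivity; both appear to need the classification of finite simple groups. Writing $R$ for the solvable radical, $\operatorname{Soc}(G/R)$ is a direct product of non-abelian simple groups, each of order prime to $p$ by the previous step, and the goal is to prove this socle is a single copy of $A_{5}$. The driving mechanism is that the non-vanishing condition on $p$ becomes extremely rigid once $p$ sits in a normal $p$-subgroup $V$: by Clifford theory an irreducible character of $G$ lying over a non-principal $G$-orbit on $\operatorname{Irr}(V)$ will vanish on suitable $p$-singular elements unless the action of the non-abelian part of $G$ on $V$ is highly special, and $A_{5}$ is the unique non-abelian simple group whose action can avoid creating any such zero. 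Isolating exactly $A_{5}$ and establishing uniqueness of the non-abelian composition factor is where I would grind through the simple groups case by case, excluding all other possibilities using their character tables and $p$-singular classes. Finally, for connectivity I would prove the contrapositive: if $\Gamma(G)$ is disconnected, then via Lemma~\ref{1} and the Gruenberg--Kegel structure of the (Frobenius-like or almost simple) groups that then arise, one checks that every prime divisor of $|G|$ already occurs as the order of a vanishing element, i.e. $V(\Gamma(G))=\pi(G)$ --- contrary to hypothesis --- so $\Gamma(G)$ must be connected.
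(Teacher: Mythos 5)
Your proposal attacks a statement the paper itself never proves: Lemma~\ref{4} is imported verbatim from Dolfi--Pacifici--Sanus--Spiga \cite{DO}, so the relevant benchmark is their proof, not anything in this manuscript. Your opening reduction is the right (and standard) device and matches theirs: picking $p\in\pi(G)\setminus V(\Gamma(G))$, tensoring $p$-defect-zero characters over a non-abelian chief factor $H/K\cong T^m$, and applying Lemma~\ref{3}(1)(2) in $G/K$ correctly shows $p$ is coprime to any composition factor covered by Lemma~\ref{2}. But your treatment of the remaining cases ($p\in\{2,3\}$, $T$ alternating or sporadic) has a genuine error: the exceptions to defect zero at $p=2,3$ do \emph{not} form a short checkable list. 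By Macdonald's computation (later refined by Granville--Ono), infinitely many alternating groups have no $2$- or $3$-defect-zero character --- e.g.\ the symmetric group $S_n$ has one at $p=2$ only when $n$ is triangular --- so ``check the resulting short list directly'' is not an available move. Worse, for these groups your fallback claim that each ``still carries a vanishing element of order divisible by $p$'' conflates ${\rm Van}(T)$ with ${\rm Van}(G)$: a zero of a character $\theta$ of the chief factor yields a zero of a character of $G$ only if one controls how $\theta$ sits under $G$, which is why \cite{DO} develop extendability machinery (a $\theta\in\mathrm{Irr}(T)$ extending to $\mathrm{Aut}(T)$ gives $\theta\times\cdots\times\theta$ extending to $G$, then Gallagher-type arguments manufacture the required zeros on $p$-singular elements). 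Nothing in your sketch substitutes for this.

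The deeper gap is that the actual content of the lemma --- connectivity of $\Gamma(G)$, uniqueness of the non-abelian composition factor, and $S\cong A_5$ --- is asserted rather than argued. Your sentence that ``$A_5$ is the unique non-abelian simple group whose action can avoid creating any such zero'' \emph{is} the theorem; announcing a case-by-case grind through all simple groups and their module actions is a plan, not a proof, and you give no mechanism for why $A_5$ survives (note it cannot be $p$-local structure of $A_5$ itself: $A_5\cong\mathrm{PSL}_2(4)$ is of Lie type and has defect-zero characters at $2,3,5$, so necessarily $p\nmid 60$ and the rigidity must come from the action on the normal $p$-subgroup, via the normal-Sylow theorem of \cite{DO2} and orbit/non-vanishing-element analysis in the spirit of Isaacs--Navarro--Wolf). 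Finally, your contrapositive route to connectivity is close to circular: Lemma~\ref{1} bounds the number of components and locates a chief factor when $\Gamma(G)$ is disconnected, but it does not yield $V(\Gamma(G))=\pi(G)$; extracting that conclusion requires analyzing the Frobenius/$2$-Frobenius and almost simple configurations that arise, which you do not carry out. In short: the reduction step is sound where Lemma~\ref{2} applies, but the exceptional-prime cases rest on a false finiteness claim, and the two main conclusions are left to unexecuted case analysis.
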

In the following lemma, we investigate the relationship between ${\rm Vo} (G)$ and $\pi_e (G)$ for some non-abelian simple group $G$. Also, several observations of $E_8 (q)$ are listed below as well.
\begin{lemma}\label{6}
Let $G$ be a non-abelian simple group of Lie type, $s$ be a prime and $t$ be an integer larger than $0$. Then the following hold:
\begin{itemize}
\item[(1)] ${\rm{Vo}}(G) = \pi _{e} (G) -\lbrace 1 \rbrace$.
\item[(2)] If $G$ is isomorphic to $E_8 (s^t)$, then $G$ is characterizable.
\item[(3)]  ${\rm{Out}}(E_8 (s^t)) \cong C_t$.
\item[(4)]  $n(GK(E_8 (s^t)))=5$, if $s^t \equiv 0,1,4\,(\!\!\!\mod 5)$.
\end{itemize}
\end{lemma}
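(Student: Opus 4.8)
The plan is to handle the four assertions independently. Only part~(1) calls for a genuine character-theoretic argument; parts~(2)--(4) are structural facts about the group $E_8(s^t)$ that I would obtain by invoking the existing classification of finite simple groups of Lie type together with the known description of their prime graphs.

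For part~(1) I would verify the two inclusions defining ${\rm Vo}(G) = \pi_e(G) - \{1\}$. The inclusion ${\rm Vo}(G) \subseteq \pi_e(G) - \{1\}$ is immediate, since the order of any vanishing element lies in $\pi_e(G)$ while the identity is never vanishing because $\chi(1) > 0$ for every irreducible character $\chi$. For the reverse inclusion I would fix any $n \in \pi_e(G)$ with $n > 1$ and an element $g$ of order $n$, choose a prime divisor $p$ of $n$, and use Lemma~\ref{2} to produce an irreducible character of $G$ of $p$-defect zero (available because $G$ is of Lie type, regardless of the size of $p$). Lemma~\ref{3}(2), applied with $N = G$, then forces every element of order divisible by $p$ to be vanishing; in particular $g$ is vanishing and $n \in {\rm Vo}(G)$. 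This yields $\pi_e(G) - \{1\} \subseteq {\rm Vo}(G)$, and hence equality.

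For part~(3) I would appeal to the description of the automorphism group of a simple group of Lie type: the group of type $E_8$ is simultaneously simply connected and adjoint, so it has trivial center and no diagonal automorphisms, while its Dynkin diagram admits no nontrivial symmetry, so there are no graph automorphisms. The only outer automorphisms are therefore the field automorphisms, which form the cyclic Galois group ${\rm Gal}(\mathbb{F}_{s^t}/\mathbb{F}_s) \cong C_t$. For part~(4) I would read off the connected components of the Gruenberg--Kegel graph from the classification of prime graph components of the finite simple groups due to Williams and Kondrat'ev. The key point is that the splitting of the components of $E_8(s^t)$ is governed by the multiplicative order of $q = s^t$ modulo $5$: this order equals $4$ precisely when $q \equiv 2,3 \pmod 5$, and in the complementary case $q \equiv 0,1,4 \pmod 5$ the primitive prime divisors of the relevant cyclotomic factors of $|E_8(q)|$ separate into exactly five components, giving $n(GK(E_8(s^t))) = 5$.

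The step I expect to be the main obstacle is part~(2). Here ``characterizable'' should be understood as recognizable by spectrum, that is, $E_8(s^t)$ is the unique finite group whose set of element orders equals $\pi_e(E_8(s^t))$; combined with part~(1) this is precisely what later permits the passage from the vanishing-order data ${\rm Vo}(G)$ back to the isomorphism type. I would establish it by citing the recognition-by-spectrum theorem for $E_8(q)$ from the recognition literature. The difficulty lies not in any deduction but in the depth of this external input, which itself rests on a case analysis of every solvable and non-solvable group sharing the spectrum; I would need to confirm that the version cited covers all prime powers $s^t$ under consideration rather than only a restricted range.
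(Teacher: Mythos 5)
Your proposal matches the paper's proof in substance: part (1) is exactly the paper's argument (Lemma \ref{2} together with Lemma \ref{3}(2), applied with $N=G$), and parts (2)--(4) rest on the same external inputs the paper cites, namely Kondrat'ev's recognition-by-spectrum theorem for $E_8(q)$, the Atlas description of ${\rm Out}(E_8(s^t))$ (for which you correctly sketch the underlying reason: trivial center, no diagonal or graph automorphisms, field automorphisms forming $C_t$), and the Williams--Kondrat'ev classification of prime graph components. Your mod-$5$ analysis in (4) agrees with Table 1e of Williams, so the proposal is correct and takes essentially the same route as the paper.
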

\begin{proof}
(1) Since $G$ is a non-abelian simple group of Lie type, we conclude from Lemma \ref{2} and Lemma \ref{3} (2) that ${\rm{Vo}} (G) = \pi_e (G) - \lbrace 1 \rbrace$. 

(2) It follows directly from {{\cite[Main theorem]{KO}}}.

(3) See Table 5 of the third section of chapter 1 in {\cite{CO}}.

(4) See {{\cite[Table 1e]{WI}}}.
\end{proof}

\section{A new characterization of $E_8 (p)$}
Let $G$ be a finite group. Denote by $\mu (G)$ the subset of $\pi_e (G)$ of elements that are maximal under divisibility, and by $\nu(G)$ any subset of $\pi_e (G)$ satisfying the condition $\mu(G) \subseteq \nu(G) \subseteq \pi_e (G)$. Following the notation in {{\cite{AA}}}, we will use $p(\Phi)$ for the maximal power of a prime $p$ lying in the spectrum of a group of Lie type $\Phi$ over a field of characteristic $p$, where the spectrum of a group $G$ is the set of element orders of $G$. Then we have the following lemma:
\begin{lemma}[{{\cite[Theorem]{AA}}}]\label{7}
Let $G$ be a group of Lie type $E_8 (q)$ over a field of characteristic $p$. Suppose that the set $\nu (G)$ is a union of the following sets:
\begin{align*}
(1)~&\lbrace(q+1)(q^2+q+1)(q^5-1), (q-1)(q^2-q+1)(q^5+1), (q+1)(q^2+1)(q^5-1), (q-1)(q^2+1)(q^5+1),\\
&(q + 1)(q^7 -1), (q -1)(q^7 + 1), q^8 -1, (q + 1)(q^3 -1)(q^4 + 1), (q -1)(q^3 + 1)(q^4 + 1), (q^2 + 1)(q^6 -1),\\
&(q^2 - 1)(q^6 + 1), (q^2 - 1)(q^2 + q + 1)(q^4 - q^2 + 1), (q^2 - 1)(q^2 - q + 1)(q^4 - q^2 + 1), (q^2 - 1)(q^6 -q^3 + 1),\\
& (q^2 -1)(q^6 + q^3 + 1), \frac{(q^2+q+1)(q^6+q^3+1)}{(3,q-1)} , \frac{(q^2-q+1)(q^6-q^3+1)}{(3,q+1)} , q^8 + q^7 - q^5 - q^4 - q^3 + q + 1,\\
&q^8 - q^7 + q^5 - q^4 + q^3 - q + 1, q^8 - q^4 + 1, q^8 - q^6 + q^4 - q^2 + 1\rbrace;\\
(2)~& p \cdot \lbrace(q^2 - q + 1)(q^5 + 1), (q^2 + q + 1)(q^5 - 1), (q + 1)(q^6 - q^3 + 1), (q - 1)(q^6 + q^3 + 1), q^7 + 1,q^7 - 1,\\
& (q^3 - 1)(q^4 - q^2 + 1), (q^3 + 1)(q^4 - q^2 + 1), \frac{q^8-1}{
(q-1)(2,q-1)} , \frac{q^8-1}{(q+1)(2,q-1)} , q^6 + 1\rbrace;\\
(3)~& p(A_2)\cdot \lbrace q^6 -1, q^6 +q^3 + 1, q^6 -q^3 + 1,(q^2 +q + 1)(q^4 -q^2 + 1),
(q^2 -q + 1)(q^4 -q^2 + 1),\\
&(q^2 -q + 1)(q^4 - 1),(q^2 + q + 1)(q^4 - 1),(q^2 - 1)(q^4 + 1),(q + 1)(q^5 - 1),(q - 1)(q^5 + 1)\rbrace;\\
(4)~& p(A_3) \cdot \lbrace q^5 - 1, q^5 + 1,(q^4 + 1)(q - 1),(q^4 + 1)(q + 1),(q^3 - 1)(q^2 + 1),(q^3 + 1)(q^2 + 1)\rbrace;\\
(5)~& p(A_4) \cdot\lbrace \frac{q^5-1}{ q-1} , \frac{q^5+1}{ q+1} , q^4 - 1\rbrace;\\
(6)~& p(A_5) \cdot \lbrace
(q^3 - 1)(q + 1),(q^3 + 1)(q - 1), q^4 + 1, \frac{q^4-1}{
(2,q-1) }, q^4 - q^2 + 1\rbrace\\
(7)~& p(D_5) \cdot\lbrace(q^2 + 1)(q - 1),(q^2 + 1)(q + 1), q^3 - 1, q^3 + 1\rbrace;\\
(8)~& p(D_6) \cdot\lbrace 
q^2 + 1\rbrace;\\
(9)~& p(E_6) \cdot \lbrace q^2 - q + 1, q^2 + q + 1, q^2 - 1\rbrace;\\
(10)~& p(E_7) \cdot\lbrace q - 1, q + 1\rbrace;\\
(11)~& \lbrace p(E_8)\rbrace.
\end{align*}
Then $\mu(G) \subseteq \nu(G) \subseteq \pi_e (G)$.
\end{lemma}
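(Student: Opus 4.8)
The plan is to read off the spectrum of $G=E_8(q)$ from the Jordan decomposition of its elements, combined with the classification of maximal tori and of centralizers of semisimple elements. Write $G=\bar G^F$, where $\bar G$ is the simple algebraic group of type $E_8$ and $F$ is the associated Frobenius endomorphism. Every $g\in G$ factors uniquely as $g=su=us$ with $s$ semisimple (the $p'$-part) and $u$ unipotent (the $p$-part), so $|g|=|s|\cdot|u|$ because the factors commute and have coprime orders. Since $\bar G$ of type $E_8$ is simply connected, Steinberg's theorem makes every centralizer $C=C_{\bar G}(s)$ of a semisimple element connected reductive, which streamlines the analysis: the unipotent part $u$ lies in $C^{F}$, whose unipotent elements coincide with those of the semisimple part of $C$, a subsystem subgroup of some type $\Phi'$, and so $|u|\le p(\Phi')$. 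Thus the whole problem reduces to enumerating the pairs $(|s|,\Phi')$ and forming the products $|s|\cdot p(\Phi')$.

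First I would handle the pure semisimple contribution, family $(1)$. Semisimple elements lie in maximal tori, and the $F$-classes of maximal tori of $E_8(q)$ are parametrised by the $112$ conjugacy classes of the Weyl group $W(E_8)$; the order of the corresponding torus is the characteristic polynomial of the Weyl element evaluated at $q$, a product of cyclotomic factors $\Phi_d(q)$ with $d$ among the relevant degrees of $|E_8(q)|$, namely $d\in\{1,2,3,4,5,6,7,8,9,10,12,14,15,18,20,24,30\}$. The maximal semisimple element orders are the exponents of these tori: the cyclic Coxeter-type tori contribute the degree-$8$ values $\Phi_{30}(q)=q^8+q^7-q^5-q^4-q^3+q+1$, $\Phi_{24}(q)=q^8-q^4+1$, $\Phi_{20}(q)$ and $\Phi_{15}(q)$, while the reducible tori contribute products such as $(q+1)(q^2+q+1)(q^5-1)$. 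Matching each entry of $(1)$ to a specific torus type is bookkeeping once the $112$ torus structures are tabulated.

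Next I would treat the mixed elements, families $(2)$--$(11)$. By the Borel--de Siebenthal algorithm the centralizers of semisimple elements are the maximal-rank subsystem subgroups obtained from the extended Dynkin diagram of $E_8$, whose semisimple types run through $A_1,A_2,A_3,A_4,A_5,D_5,D_6,E_6,E_7$ and $E_8$ itself. For each type $\Phi'$ the maximal unipotent order in $C^{F}$ is $p(\Phi')$ (a regular unipotent of the subsystem), and the complementary torus factor of $C$ supplies the semisimple orders. This is exactly the layered structure of the lemma: family $(2)$ arises from a centralizer with an $A_1$ factor, so $p(A_1)=p$; family $(10)$ from a Levi $E_7\times T_1$, giving $p(E_7)\cdot(q\mp 1)$; family $(11)$ from the regular unipotent class of $E_8$; and in each case the bracketed set is precisely the list of exponents of the complementary torus, read off from the rational structure of $C$.

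Finally I would prove the two inclusions. The inclusion $\nu(G)\subseteq\pi_e(G)$ is constructive: each listed number is realised by an explicit $su$ built from a semisimple element in the prescribed torus together with a regular unipotent in the prescribed subsystem, so it genuinely occurs in the spectrum. The hard part will be the reverse inclusion $\mu(G)\subseteq\nu(G)$, the completeness assertion: one must verify that \emph{every} element order maximal under divisibility already appears among the eleven families, i.e.\ that no semisimple class and no unipotent class inside its centralizer has been missed and that none of the resulting products dominates a listed one. This requires the full classification of semisimple centralizers in $E_8$, the exact abelian-group structure (not merely the order) of each relevant torus so that the genuine exponent rather than the torus order is used, and a uniform divisibility comparison across all $112$ torus types to discard the dominated orders. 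Carrying out this bookkeeping uniformly in $q$, while tracking the small-characteristic corrections concealed in the values $p(\Phi')$, is the technical core of the argument.
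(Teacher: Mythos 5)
First, note what you are being measured against: the paper does not prove Lemma \ref{7} at all. It is imported verbatim, with attribution, as the Theorem of Buturlakin \cite{AA}, where the spectrum of $E_8(q)$ is actually computed. So the only question is whether your blind attempt would stand as a proof of the cited theorem. Your framework is the right one, and it is essentially the framework of the cited source: Jordan decomposition $g=su$ with $|g|=|s|\,|u|$, semisimple orders read from the $112$ $F$-classes of maximal tori (orders given by characteristic polynomials of Weyl classes, hence products of cyclotomic values $\Phi_d(q)$), unipotent orders bounded by $p(\Phi')$ for the subsystem supporting $u$, and connectedness of semisimple centralizers since $E_8$ has trivial fundamental group.

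But as submitted this is a research plan, not a proof. The inclusion $\mu(G)\subseteq\nu(G)$ is the entire content of the theorem --- $\nu(G)\subseteq\pi_e(G)$ is the easy half --- and your final paragraph explicitly defers it (``the technical core'') without carrying out the classification of semisimple centralizers, the determination of torus exponents as opposed to torus orders, or the divisibility comparison that discards dominated products. Two concrete points where the plan as stated would also go wrong. (i) Your dictionary ``one family $\leftrightarrow$ one centralizer type'' is inaccurate: the eleven families are indexed by the \emph{value} of the $p$-part of the element order, and these values collide (already for $p\ge 3$ one has $p(A_2)=p$, so families (2) and (3) carry the same multiplier; for $p\ge 31$ nearly all of them do). The correct organization is by the minimal subsystem supporting a unipotent element of the given order, with each bracketed set listing the maximal semisimple orders in the reductive parts of the centralizers of such unipotents; in particular family (2) is not produced by ``a centralizer with an $A_1$ factor'' but by all mixed elements whose unipotent part has order exactly $p$. (ii) Your catalogue of subsystem types ($A_1,A_2,A_3,A_4,A_5,D_5,D_6,E_6,E_7,E_8$) merely reads off the lemma's multipliers and omits types such as $D_8$, $A_8$, $E_7A_1$, $E_6A_2$ and $A_4A_4$, all of which arise via Borel--de Siebenthal as semisimple parts of centralizers in $E_8$; an enumeration missing them cannot certify completeness. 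In short: right strategy, consistent with \cite{AA}, but the completeness half --- the theorem itself --- is announced rather than proved.
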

\begin{lemma}\label{5}
Let $p < q = s^t$, where $t \in \mathbb{N^{*}}$, $p,s$ are primes with $ p,s^t \equiv 0,1,4\,(\!\!\!\mod 5)$. Then $\pi_e (E_8 (q)) \not\subseteq \pi_e (E_8 (p))$.
\end{lemma}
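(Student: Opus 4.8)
The plan is to reduce the non-containment to a single size comparison: I will exhibit one element order of $E_8(q)$ that is strictly larger than every element order of $E_8(p)$, which immediately forces $\pi_e(E_8(q)) \not\subseteq \pi_e(E_8(p))$. By Lemma \ref{7}, the value
\[ \Psi(q) := q^8+q^7-q^5-q^4-q^3+q+1 \]
(the penultimate entry of set (1)) lies in $\nu(E_8(q)) \subseteq \pi_e(E_8(q))$. The factorization $\Psi(q)-q^8 = (q+1)(q^6-q^5-q^3+1) = (q+1)\bigl(q^5(q-1)-(q^3-1)\bigr)$ is positive for $q \ge 2$, so $\Psi(q) > q^8$.

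Next I bound $\max \pi_e(E_8(p))$ from above using the same lemma applied to $E_8(p)$: every element order divides a maximal one lying in $\mu(E_8(p)) \subseteq \nu(E_8(p))$, so it suffices to bound each entry of $\nu(E_8(p))$ by $(p+1)^8$. An entry of set (1) is, up to a divisor $(3,p\mp1)\ge1$, a product of factors of total degree $8$ in $p$; since every such factor of degree $d$ is a polynomial with leading coefficient $1$ and remaining coefficients in $\{-1,0,1\}$, it is at most $1+p+\cdots+p^{d}\le(p+1)^{d}$, and multiplying gives the bound $(p+1)^8$. An entry of sets (2)--(11) has the form $p(\Phi)\cdot f$ with $f$ a torus-type factor of degree $e\le7$, so $f\le(p+1)^e$; since the hypothesis $p\equiv0,1,4\pmod5$ forces $p\ge5$, the maximal unipotent order $p(\Phi)$ is a power of $p$ small enough that $p(\Phi)\le(p+1)^{\,8-e}$ in each of the ten cases (as the subsystem $\Phi$ grows the exponent $e$ drops by several, leaving ample room for the slowly growing unipotent exponent). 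Hence every entry is $\le(p+1)^8$, so $\max\pi_e(E_8(p))\le(p+1)^8$.

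Finally, since $p<q$ are integers, $q\ge p+1$, hence $q^8\ge(p+1)^8$; combining the bounds,
\[ \Psi(q) > q^8 \ge (p+1)^8 \ge \max\pi_e(E_8(p)). \]
Thus $\Psi(q)\in\pi_e(E_8(q))$ exceeds every element order of $E_8(p)$, giving $\Psi(q)\notin\pi_e(E_8(p))$ and therefore $\pi_e(E_8(q))\not\subseteq\pi_e(E_8(p))$, as required. The one genuinely fiddly step is the uniform bound of the second paragraph: one must confirm that none of the ``unipotent $\times$ torus'' orders in sets (2)--(11) accidentally beats $(p+1)^8$. This is routine rather than conceptual, since the unipotent exponents $p(\Phi)$ grow only logarithmically in the rank while the accompanying torus factor always has degree strictly below $8$; the congruence hypothesis enters only through its harmless consequence $p\ge5$.
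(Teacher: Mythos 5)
Your argument is correct, but it takes a genuinely different route from the paper's. The paper also starts from a single large witness in set (1) of Lemma \ref{7} --- it uses $T=(q^2+1)(q^6-1)=q^8+q^6-q^2-1$ rather than your $\Psi(q)=\Phi_{30}(q)$ (which, incidentally, is the fourth-from-last, not penultimate, entry of set (1), though this is immaterial) --- but instead of one raw magnitude comparison it argues by divisibility: assuming $T\in\pi_e(E_8(p))$, $T$ must divide some entry $T'$ of the lists (3)--(11) for $E_8(p)$, and then a gcd analysis of the five factors of $T$ (using $p>3$, hence the congruence hypothesis) shows the $p$-part of $T$ is at most $q^2+q+1$, so its $p'$-part is at least $T_0=(q^4-1)(q^2-q+1)\ge p^6+5p^5+11p^4+\cdots$, which exceeds the $p'$-part of every entry of (3)--(11), whose torus factors have degree at most $6$ in $p$. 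The point of that maneuver is precisely to sidestep what you flag as the fiddly step: by comparing only $p'$-parts, the paper needs no upper bound on the unipotent orders $p(\Phi)$ in sets (3)--(11), whereas your uniform bound $p(\Phi)\le(p+1)^{8-e}$ genuinely requires the values of $p(\Phi)$, which Lemma \ref{7} does not supply and which you assert rather than verify. Your assertion is true and easily discharged, but note that a crude uniform bound does not suffice (e.g.\ $p^3\cdot(p+1)^6>(p+1)^8$, so set (3) with torus degree $6$ needs $p(A_2)=p$ exactly): one uses $p(\Phi)=p^{\lceil\log_p(h+1)\rceil}$ with $h$ the height of the highest root of $\Phi$, giving for $p\ge5$ that $p(A_2)=p(A_3)=p(A_4)=p$, that $p(A_5),p(D_5),p(D_6),p(E_6),p(E_7)\in\{p,p^2\}$, and that $p(E_8)\le p^3$ (the cube occurring only at $p=5$), after which each of the ten cases clears $(p+1)^{8-e}$ comfortably; in set (2) the $p$-part is literally $p$. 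With that supplied, your proof is complete and arguably cleaner: one strict inequality $\Psi(q)>q^8\ge(p+1)^8\ge\max\pi_e(E_8(p))$, no gcd bookkeeping, and no case split --- indeed the paper itself tacitly relies on the same size estimate you prove for set (1), since its unexplained restriction of $T'$ to (3)--(11) ("since $p<q$") is justified exactly by $T>q^8\ge(p+1)^8$ exceeding every entry of (1) and (2). In exchange, the paper's $p'$-part comparison is self-contained given only the shapes of the lists in Lemma \ref{7}, while yours imports standard but external facts about unipotent element orders.
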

\begin{proof}
Suppose that the lemma is false, and choose a pair $(p,q=s^t)$ for which it fails. By Lemma \ref{7}, we have:
$$T:=(q^2 + 1)(q^6 -1) = (q^2 +1)(q-1)(q+1)(q^2 +q+1)(q^2 -q+1) \in \pi_e (E_8 (q)) \subseteq \pi_e (E_8 (p)).$$
Since $p<q$, it follows again from Lemma \ref{7} that there exists an element $T'$ in $(3)\sim(11)$ in Lemma \ref{7} such that $T \mid T'$. As we have 
\begin{align*}
&(q^2+1,q-1),(q^2+1,q+1) \leq 2,\,(q^2+1,q^2 -q+1) = (q^2+1,q^2 +q+1) =1,\\
&(q-1,q+1) \leq 2,\,(q-1,q^2 -q+1)=1,\,(q-1,q^2 +q+1) \leq 3,\\
&(q+1,q^2+q+1)=1,\,(q+1,q^2-q+1) \leq 3,\\
&(q^2+q+1,q^2-q+1)=1,
\end{align*}
it is indicated by $p>3$ that the $p$-part of $T$ is at most $q^2+q+1$. Hence the $p'$-part of $T$ is at least $T_0 :=(q^2+1)(q-1)(q+1)(q^2-q+1) = (q^4-1)(q^2-q+1)$. Let $T_0 '$ denote  the $p'$-part of $T'$. Then obviously we have $T_0 \leq T_0 '$. However, it follows from $q \geq p+1$ that:
\begin{align*}
T_0 &=(q^4-1)(q^2-q+1) \geq ((p+1)^4-1)((p+1)^2-(p+1)+1) \\ & =(p^4+4p^3+6p^2+4p)(p^2+p+1)
= p^6+5p^5+11p^4+14p^3+10p^2+4p,
\end{align*}
which is clearly larger than the $p'$-parts of all elements in $(3)\sim(11)$ in Lemma \ref{7}, a contradiction. Thus the result follows.
\end{proof}
\begin{proof}[Proof of Theorem \ref{main theorem}]
If $G$ is isomorphic to $E_8 (r)$, then clearly ${\rm{Vo}}(G) = {\rm{Vo}}(E_8 (r))$. Now we prove the sufficiency. Assume that ${\rm{Vo}}(G) = {\rm{Vo}}(E_8 (r))$. By Lemma \ref{6} (1), it indicates that ${\rm{Vo}}(E_8 (r)) = \pi_e (E_8 (r)) -\lbrace 1 \rbrace$. Hence we get that $ \Gamma(G)= \Gamma(E_8 (r))= GK(E_8 (r))$. By Lemma \ref{6}, we have $n(\Gamma(G)) = n(\Gamma(E_8 (r)))=5$. It follows from Lemma \ref{1} that $G$ is not solvable, and $G$ is disconnected. Since $G$ is not isomorphic to $A_7$, we conclude that $G$ has a unique non-abelian chief factor $S$. Let $N$ be the maximal normal solvable subgroup of $G$, where $M/N$ is a chief factor. As $M/N$ is not solvable, one can easily find that $M/N$ is isomorphic to $S$, i.e., $M/N$ is a non-abelian simple group. Let $A/N :=C_{G/N} (M/N)$, and suppose that $A \neq N$. It follows from the maximality of $N$ that $A/N$ is not solvable. Since $M/N$ is the unique non-abelian chief factor of $G$, we conclude that $M/N \leq A/N$. Hence $M/N$ is abelian, a contradiction. Thus we have $A=N$ and $Z(M/N)=1$. 

Now let $\overline{G} :=G/N$. By N/C Theorem we conclude that $N_{\overline{G}} (\overline{M}) / C_{\overline{G}} (\overline{M})   \lesssim {\rm{Aut}} (\overline{M})$, i.e., $\overline{G} \lesssim {\rm{Aut}} (\overline{M})$. Since $\overline{M} / Z (\overline{M}) \cong {\rm{Inn}} (\overline{M})$, $Z(\overline{M})=1$, and ${\rm{Aut}} (\overline{M}) / {\rm{Inn}} (\overline{M}) \cong {\rm{Out}} (\overline{M})$, it follows that $G/M \cong \overline{G} / \overline{M} \lesssim {\rm{Out}}(\overline{M})$. By Lemma \ref{6} (4), we have $n(\Gamma(G))=5$. Hence by Lemma \ref{1} (2), we get that $n(GK(\overline{M})) \geq 5$.
\begin{itemize}
\item[\textbf{Step 1.}]  $\overline{M} \cong E_8 (r) $.
\end{itemize}
Since $n(GK(\overline{M})) \geq 5$, it follows from {{\cite{KO2}}} and {{\cite{WI}}} that $\overline{M}$ is isomorphic to $E_8 (q)$, where $q =s^t$, and $q \equiv 0,1,4\,(\!\!\!\mod 5)$ or $J_4$. By our hypothesis, there exists $p \in \pi(J_4)$ such that $p \notin \pi (E_8 (r))$. If $\overline{M} \cong J_4$, then we conclude from Lemma \ref{2}  and Lemma \ref{3} (2)(3) that there exists $n \in \mathbb{N^{*}}$ such that $pn \in {\rm Vo} (G) = {\rm Vo}(E_8 (r))$. By Lemma \ref{6} (1), it yields that $p \in \pi (E_8 (r))$, a contradiction. Thus $\overline{M}$ is not isomorphic to $J_4$. Now let $\overline{M} \cong E_8 (q)$, where $q =s^t$, $s$ is a prime, and $q \equiv 0,1,4\,(\!\!\!\mod 5)$. If $q >r$, it follows directly from Lemma \ref{5} that $\pi_e (E_8 (q)) \not\subseteq \pi_e (E_8 (r))$. However, for any $m \in \pi_e (E_8 (q)) = \pi_e (\overline{M})$, Lemma \ref{2} and \ref{3} (2) imply that there exists $n \in \mathbb{N^{*}}$ such that $mn \in {\rm Vo} (G) = {\rm Vo} (E_8 (r))$. By Lemma \ref{6} (2), it yields that $m \in \pi_e (E_8 (r))$. Hence it indicates from the choice of $m$ that $\pi_e (E_8 (q)) \subseteq \pi_e (E_8 (r))$, a contradiction. Hence we conclude that $q \leq r$. If $q <r$, then by our hypothesis, there exists $p \in \pi(E_8 (q))$ such that $p \notin \pi (E_8 (r))$. It follows from Lemma \ref{2}, and Lemma \ref{3} (2)(3) that there exists $n \in \mathbb{N^{*}}$ such that $pn \in {\rm Vo} (G) = {\rm Vo}(E_8 (r))$. By Lemma \ref{6} (1), it yields that $p \in \pi (E_8 (r))$, a contradiction. Thus $\overline{M}$ is not isomorphic to $E_8 (q)$. Finally, we get that $\overline{M}$ is isomorphic to $E_8 (r)$.
\begin{itemize}
\item[\textbf{Step 2.}]  $M \cong E_8 (r) $.
\end{itemize}
Suppose that $N>1$ and let $1 \leq V < N$, where $N/V$ is a chief factor of $G$, and $\widetilde{G}:=G/V$. Since $N$ is solvable, we get that $\widetilde{N}$ is an elementary abelian $q$-group, where $q$ is a prime. It is clear that $q \in \pi(\widetilde{M}) \subseteq \pi(G)$. On the other hand, it follows from Lemma \ref{4} that $\Gamma (G) = \pi (G)$. Hence it implies that $q \in \Gamma (G) \subseteq {\rm{Vo}} (G)$. Since $ \widetilde{M} / \widetilde{N} \cong E_8 (r)$ is a simple group of Lie type, we obtain from  Lemma \ref{2}, and Lemma \ref{3} (2) that any non-trivial element of $\widetilde{M} / \widetilde{N}$ is a vanishing element of $\widetilde{G} / \widetilde{N}$. By Lemma \ref{3} (1), we conclude that $\widetilde{M} \setminus \widetilde{N} \subseteq {\rm{Van}} (\widetilde{G})$, i.e., $\pi_e (\widetilde{M} \setminus \widetilde{N}) \subseteq {\rm{Vo}} (\widetilde{G})$. On the other hand, it yields from Lemma \ref{3} (3) that for any $m \in {\rm{Vo}} (\widetilde{G})$, there exists $n \in \mathbb{N^{*}}$ such that $nm \in {\rm{Vo}} (G)= {\rm{Vo}} (E_8 (r))$. It follows from Lemma \ref{6} (1) that $m \in {\rm{Vo}} (G)={\rm{Vo}} (E_8 (r))$. Hence we have that ${\rm{Vo}} (\widetilde{G}) \subseteq {\rm{Vo}} (G) $. Therefore we get that 
$$\pi_e (\widetilde{M}) = \pi_e (\widetilde{M} \setminus \widetilde{N}) \cup \pi_e (\widetilde{N}) \subseteq {\rm{Vo}} (G) \cup \lbrace 1,q \rbrace ={\rm{Vo}} (G) \cup \lbrace 1 \rbrace = \pi_e (E_8 (r)).$$
As $\pi_e (E_8 (r))  = \pi_e (\overline{M}) \subseteq  \pi_e (\widetilde{M})$, we have $\pi_e (E_8 (r)) = \pi_e (\widetilde{M})$. It follows from Lemma \ref{6} (2) that $E_8 (r) \cong \widetilde{M}$, a contradiction to $V<N$. Thus we have $N=1$ and the result follows.
\begin{itemize}
\item[\textbf{Step 3.}]  $M =G$.
\end{itemize}
It is clear that $G/M \lesssim {\rm{Out}} (M)     =1$. Hence we conclude that $G=M$ and the result follows.
\end{proof}
\begin{proof}[Proof of Corollary \ref{corollary}]
Let $\Omega:=\lbrace 919,1289,1931,3911,4691,5381,7589 \rbrace$, and $r$ be a prime power. By {{\cite{CO}}}, we conclude that:
\begin{align*}
\pi (E_8 (r)) =&  \pi (r(r^8-1)(r^{10}-1)(r^{12}-1)(r^{14}-1)(r^{18}-1))\cup \pi \left(\frac{r^{10} -r^{5} +1}{r^2 -r+1}\right)\cup \pi\left(r^8 -r^4 +1\right)\\
&\cup  \pi \left(\frac{r^{10} +1}{r^2 +1}\right)
\cup \pi\left( \frac{r^{10}+r^5 +1}{r^2 +r +1}\right) \\
=&\pi (r,r^2-1,r^5+1,r^5-1,r^6+1,r^7+1,r^7-1,r^8-r^4+1,\\
&r^{9}+1,r^9-1,r^{10}-r^5+1,r^{10}+r^5+1,r^{10}+1);\\
\pi(J_4) =& \lbrace 2, 3, 5, 7, 11, 23, 29, 31, 37, 43 \rbrace.
\end{align*}
By Theorem \ref{main theorem}, we only need to prove that for any fixed prime $p<10000$, $p \notin \Omega$, we have $\pi (E_8 (r)) \not\subseteq  \pi (E_8 (p))$ for every prime power $r<p$, and $\pi(J_4) \not\subseteq \pi (E_8 (p))$. This can be verified by computational method, and we use the software Python \cite{PY} to achieve this. Corresponding Python code has been listed in Section \ref{appendix} and we are done.
\end{proof}
\section*{Acknowledgement}
The author thanks Prof. Zhencai Shen, whose published work drew the author’s attention to the problem of characterizing simple groups via vanishing elements. The author is immensely grateful to Baoyu Zhang for his insight, patience and selfless guidance throughout the preparation of this paper in the summer of 2023, without whose contributions the present paper would be non-existent. Baoyu Zhang is now at the University of Birmingham; he was a student of Prof. Shen and taught the author advanced group theory. Thanks also extend to Shuangzhi Li for his generous programming support in July 2023 that improves the paper considerably, and it was Baoyu Zhang who requested Li’s help.

\section{Appendix}\label{appendix}
In this section, we present the Python program code used to compute Corollary \ref{corollary} as follows. Copy and run this code in Python, and the result presented in the paper will come out. Due to the restriction of computing power and website response time, the program needs to run for several hours to obtain our results.
\begin{lstlisting}[language=Python]
#Imports and Initialization
import time
import json
from selenium import webdriver
from selenium.webdriver.common.by import By
from selenium.webdriver.common.keys import Keys
from selenium.webdriver.support.ui import WebDriverWait
from selenium.webdriver.support import expected_conditions as EC

browser = webdriver.Chrome()
browser.get("https://zh.numberempire.com/numberfactorizer.php")

#Global Variables and Constants
MAXN = 10000010
SEARCH_RANGE = 10000

is_prime = [1 for i in range(MAXN)]
prime_numbers = []
candidate_r = []
candidate_theta = []
factor_holders = {}
pie = {}
Lambda = [2, 3, 5, 7, 11, 23, 29, 31, 37, 43]

#Loading and Saving Data
def load_pie():
result = {}
with open('pie.json', 'r') as f:
data = json.load(f)
for key in data:
result[int(key)] = data[key]
return result

def save_pie():
with open('pie.json', 'w') as file:
json.dump(pie, file, indent=4)

#Prime Number Generation
def get_prime_number():
   for i in range(2, MAXN):
      if is_prime[i]:
      prime_numbers.append(i)
   for j in range(2 * i, MAXN, i):
      is_prime[j] = 0

#Finding Candidate Primes for r
def get_canditate_r():
   for i in range(len(prime_numbers)):
      if (prime_numbers[i] >= SEARCH_RANGE):
break
      if prime_numbers[i] % 5 == 0 
      or prime_numbers[i] % 5 == 1 
      or prime_numbers[i] % 5 == 4:
      candidate_r.append(prime_numbers[i])

#Exponentiation Function
def qpow(base, exponent):
    result = 1
    while exponent > 0:
        if exponent % 2:
        result *= base
        base *= base
        exponent //= 2
    return result

#Finding Candidate Values for theta
def get_canditate_theta():
    max_exponent = 0
    for i in range(SEARCH_RANGE):
        if qpow(2, i) > candidate_r[len(candidate_r) - 1]:
        max_exponent = i
        break
    unordered_candidate_theta = []
    for i in range(len(prime_numbers)):
        if (prime_numbers[i] >= SEARCH_RANGE):
                break
        for j in range(1, max_exponent):
            theta = qpow(prime_numbers[i], j)
            if (theta >= SEARCH_RANGE):
            break
            if theta % 5 == 0 or theta % 5 == 1 or theta % 5 == 4:
                unordered_candidate_theta.append(theta)
    return sorted(unordered_candidate_theta)

#Calculating $\pi (E_8 (\theta))$ for the prime power theta
def get_factor_holder():
    for i in range(len(candidate_theta)):
        factor_holders[candidate_theta[i]] =[candidate_theta[i], 
        candidate_theta[i] - 1, candidate_theta[i] + 1, 
        qpow(candidate_theta[i], 4) + 1,  qpow(candidate_theta[i], 5) - 1, 
        qpow(candidate_theta[i], 5) + 1, qpow(candidate_theta[i], 6) + 1,
        qpow(candidate_theta[i], 7) - 1, qpow(candidate_theta[i], 7) + 1, 
        qpow(candidate_theta[i], 8) - qpow(candidate_theta[i], 4) + 1,
        qpow(candidate_theta[i], 9) - 1, qpow(candidate_theta[i], 9) + 1, 
        qpow(candidate_theta[i], 10) - qpow(candidate_theta[i], 5) + 1,
        qpow(candidate_theta[i], 10) + 1,
        qpow(candidate_theta[i], 10) + qpow(candidate_theta[i], 5) + 1]
        
def get_factor(number):
    result = []
    now_number = number
    for i in range(len(prime_numbers)):
        if prime_numbers[i] > now_number:
            return set(result)
        if now_number % prime_numbers[i] == 0:
            result.append(prime_numbers[i])
            while now_number  % prime_numbers[i] == 0:
                now_number = now_number / prime_numbers[i]
    return result

def analysis_result(result_str):
    result = []
    split_str = result_str.split("*")
    for str in split_str:
        result.append(int(str.split("^")[0]))
    return result

def query_factor(number):
    search_box = browser.find_element(By.ID, "number")
    search_box.clear()
    search_box.send_keys(str(number))
    submit_button = browser.find_element(By.CSS_SELECTOR,   
                    "input[type='submit'][value='decomposite']")
    submit_button.click()
    time.sleep(1.0)
    result_box = browser.find_element(By.ID, "result1")
    return analysis_result(result_box.get_attribute("textContent"))
def get_pie(number):
    if number in pie:
        return pie[number]
    unordered_result = []
    if number in factor_holders:
        for factor_holder in factor_holders[number]:
            if factor_holder < MAXN * MAXN:
                unordered_result.extend(get_factor(factor_holder))
            else:
                unordered_result.extend(query_factor(factor_holder))
        result = sorted(list(set((unordered_result))))
        return result
    else:
        return result

#Verify if for all prime powers theta < r the union of $\pi(E_8 (\theta)$ 
#and Lambda is not contained in $\pi (E_8 (r))$ 
def check_valid():
    result = []
    for r in candidate_r:
        valid = True
        for theta in candidate_theta:
            if theta < r and set(pie[theta]).issubset(set(pie[r])):
                valid = False
                break
        if valid and not set(Lambda).issubset(set(pie[r])):
            result.append(r)
    return result

#Main Execution Block and Output Result
def solve():
    for i in range(len(candidate_theta)):
        if candidate_theta[i] in pie:
            continue
        pie[candidate_theta[i]] = get_pie(candidate_theta[i])
        save_pie()
    return check_valid()
            
if __name__ == "__main__":
    pie = load_pie()
    get_prime_number()
    get_canditate_r()
    candidate_theta = get_canditate_theta()
    get_factor_holder()
    print(solve())
\end{lstlisting}

\begin{thebibliography}{0}
\bibitem{AS} S. Askary, Recognition by the set of orders of vanishing elements and
order of ${\rm PSL} (3, p)$, {\it An. \c Stiint. Univ. Al. I. Cuza Ia\c si. Mat. (N.S.)} \textbf{118} (2022) 185-193.
\bibitem{AS2} S. Askary, Characterization of some finite simple groups by the set of orders of vanishing elements and order, {\it Ukr. Math. J.} \textbf{73} (2022) 1663-1673.
\bibitem{AA} A. A. Buturlakin, Spectra of groups $E_8 (q)$, {\it Algebra  Logic}, \textbf{57} (2018) 1-8.
\bibitem{CO} J. H. Conway, R. T. Curtis, S. P. Norton, R. A. Parker, R. A. Wilson, Atlas of finite groups, Maximal Subgroups and Ordinary
Characters for Simple Groups, Oxford University Press, New York, 1985.
\bibitem{DO2} S. Dolfi, E. Pacifici , L. Sanus, P. Spiga, On the orders of zeros of irreducible characters, {\it J.
Algebra}, \textbf{321} (2009) 345–352.
\bibitem{DO} S. Dolfi, E. Pacifici, L. Sanus, P. Spiga, On the vanishing prime graph of finite groups, {\it J.
London Math. Soc.} \textbf{82} (2010) 167–183.
\bibitem{MA} M. F. Ghasemabadi, A. Iranmanesh, M. Ahanjideh, A new characterization of some families of finite simple groups, {\it Rend.  Sem. Mat. Univ. Padova}, \textbf{137} (2017) 57–74.
\bibitem{MA2} M. Khatami, A. Babai, Recognition of some families of finite simple groups by order and set of orders of vanishing elements,  {\it Czech. Math. J.} \textbf{68} (2018) 121-130.
\bibitem{KO} A. S. Kondrat’ev, Recognizability by spectrum of groups $E_8(q)$, {\it Trudy Inst. Mat. i Mekh. UrO RAN}, \textbf{16} (2010)  146–149.
\bibitem{KO2} A. S. Kondrat'ev, On prime graph components of finite simple groups, {\it Mat. Sb.}
(1989) \textbf{180} 787-797.
\bibitem{VD} V. D. Mazurov, E. I. Khukhro, eds. {\it Unsolved problems in Group Theory: The Kourovka Notebook}. 20th
edition. Novosibirsk: Russian Academy of Sciences, Siberian Branch, Institute of Mathematics, 2022.
\bibitem{PY} G. Van Rossum, F. L. Drake. Python 3 Reference Manual. Scotts Valley, CA: CreateSpace, 2009.
\bibitem{WI} J. S. Williams, Prime graph components of finite groups, {\it J. Algebra} \textbf{69} (1981) 487-513. 
\bibitem{QY} Q. Yan, The influence of the orders of vanishing elements on the structure of finite groups, {\it Master's thesis of China Agricultural University}, 2019.
\end{thebibliography}
\end{document}